\newtheorem{theorem}{Theorem}
\newtheorem{proposition}{Proposition}
\newtheorem{remark}{Remark}
\begin{document}   %%%%%

%version del 28 de febrero
\fancyhead[RO]{P.B. Acosta--Hum\'anez,  \'Alvarez--Ram\'{\i}rez, T.J. Stuchi}
\fancyhead[LE]{Exceptional potentials via  polynomial bi-homogeneous potentials}
\fancyhead[RE,LO]{\thepage}

 \author{Primitivo B. Acosta--Hum\'anez\thanks{Instituto de Matem\'atica, Facultad de Ciencias, Universidad Aut\'onoma de Santo Domingo, Dominican Republic.  (pacosta-humanez@uasd.edu.do), (ORCID ID: \href{https://orcid.org/0000-0002-5627-4188}{0000-0002-5627-4188}). $^*$Corresponding Author.},\\ Martha \'Alvarez--Ram\'{\i}rez\thanks{Departamento de Matem\'aticas, UAM--Iztapalapa, M\'exico,
City, M\'exico. (mar@xanum.uam.mx), (ORCID ID: \href{https://orcid.org/0000-0001-9187-1757}{0000-0001-9187-1757})},\\ Teresinha J. Stuchi\thanks{Instituto de F\'isica, Universidade Federale de Rio de Janeiro, Rio de Janeiro, Brazil (tstuchi@if.ufjr.br), (ORCID ID: \href{https://orcid.org/0000-0003-0700-6205}{0000-0003-0700-6205}) }}

 \title{A note on the  integrability of  exceptional potentials via  polynomial bi-homogeneous potentials}
 
 \shortauthor{P.B. Acosta--Hum\'anez,  \'Alvarez--Ram\'{\i}rez, T.J. Stuchi}

\maketitle

 \begin{abstract}
This paper is concerned with the polynomial integrability of the  two-dimensional Hamiltonian systems associated to complex homogeneous polynomial potentials  
of degree $k$ of type  $V_{k,l}=\alpha (q_2-i q_1)^l (q_2+iq_1)^{k-l}$    with $\alpha\in\mathbb{C}$ and $l=0,1,\dots, k$, called exceptional potentials. Hietarinta \cite{Hietarinta1983} proved that the potentials with   $l=0,1,k-1,k$ and $l=k/2$ for $k$ even are polynomial integrable.
We present  an  elementary proof of this fact in the context of the polynomial bi-homogeneous potentials, as was introduced by 
Combot et al. \cite{Combot2020}.
In addition, we take advantage of the fact that we can exchange the exponents  
to derive an additional   first integral   for $V_{7,5}$,  unknown  so far. The paper concludes with   a Galoisian analysis  for $l=k/2$.
\end{abstract}
 
 \begin{keywords}
Hamiltonian system with two degrees of freedom, 
Homogeneous potentials, Integrability.
 \end{keywords}

\footnotesize{\noindent\textbf{Resumen}\\

\noindent Este art\'iculo trata de la integrabilidad polinomial de sistemas hamiltonianos potenciales polinomiales homogeneos complejos de grado $k$ del tipo  $V_{k,l}=\alpha (q_2-i q_1)^l (q_2+iq_1)^{k-l}$ con $\alpha\in\mathbb{C}$ y $l=0,1,\dots, k$, denominados potenciales excepcionales. Hietarinta \cite{Hietarinta1983} prob\'o que los potenciales con $l=0,1,k-1,k$ y $l=k/2$ para $k$ par son integrables polinomialmente.
Presentamos una prueba elemental de este hecho en el contexto de los potenciales bi-homog\'eneos polinomiales, tal como fueron introducidos por Combot et al. \cite{Combot2020}. Adicionalmente, tomamos ventaja del hecho que podemos intercambiar los exponentes para obtener una integral primera adicional para $V_{7,5}$,  desconocido hasta donde sabemos. El art\'iculo finaliza con un an\'alisis Galoisiano para $l=k/2$.\\

\noindent \textbf{Palabras claves:} Sistemas hamiltonianos de dos grados de libertad, potenciales homog\'eneos, integrabilidad.
}

\smalltableofcontents
\section{Introduction}
The existence of first integrals in Hamiltonian systems is of utmost importance, since it allows us to lower the dimension of such systems.
The Liouville theorem states that a Hamiltonian system with $n$ degrees of freedom is completely integrable if
there exist $n$ independent first integrals in involution. Thus, the problem can be integrated  by quadratures, see \cite{Abraham}. In other words,
completely integrable Hamiltonians are systems for which, in a way or another, we can obtain a full description of their solutions

The study of the integrability of Hamiltonian systems with homogeneous potentials  
within the framework of Ziglin's  theory  \cite{zig1,zig2}
was started by Yoshida  \cite{yo1,yo2}. Since then,  a great deal of research has been done, mainly based on  differential Galois theory of ordinary linear differential equations, where the Morales-Ramis theorem plays a key role  giving  necessary conditions for the  integrability in the Liouville sense of a Hamiltonian system   having meromorphic first integrals.  See  \cite{mora1, Maciejewski2005, Studzi2013}  and references therein.

 We assume that  $\mathbb{C}^{2n}$ is the  symplectic linear space with  $\mathbf{q} = (q_1,\dots, q_n)$ and $\mathbf{p} = (p_1,\dots, p_n)$  being  the position and momentum vectors, respectively.
Suppose we have a Hamiltonian system with   Hamiltonian function
\begin{equation}\label{ham_1n}
H= \frac{1}{2}\sum_{i=1}^n p_i^2 +  V(\mathbf{q}),
\end{equation}
where   the potential energy  $V=V(\mathbf{q})\in  \mathbb{C}[\mathbf{q}]$ is  a homogeneous polynomial of degree $k$, that is, $V(\lambda \mathbf{q})=\lambda^kV(\mathbf{q})$ for all nonzero  $\lambda\in \mathbb{C}$ and  $k\in \mathbb{Z}$.
The motion equations of   the above Hamiltonian can be written as:
\begin{equation}\label{ham_2n}
\frac{d }{dt} \mathbf{q} = \mathbf{p}, \qquad  \frac{d }{dt} \mathbf{p} = - V'(\mathbf{q}),
\end{equation}
where $V'(\mathbf{q}) $ indicates the gradient of $V(\mathbf{q})$.
The concept of Darboux points play a central role in the integrability analysis of a homogeneous potential.
A non-zero point $\mathbf{d}\in \mathbb{C}^n$ is called a \emph{Darboux point} of  \eqref{ham_2n}
if it is a solution of  $V'(\mathbf{q}) =\lambda\mathbf{d}$,
where $\lambda\in\mathbb{C}\setminus\{0\}$.  It is also said  that a Darboux point $\mathbf{d}$ is a {\em proper} Darboux point if and only if
$V'(\mathbf{d})\neq \mathbf{0}$. Otherwise, it is called an improper Darboux point.
In other words,  a Darboux point can be seen as  a point  in the complex projective space $\mathbb{C}\mathbb{P}^ {n-1}$ with
$[d_1:\cdots : d_n]$ as homogeneous coordinates. For more information on this subject the reader is referred to \cite{Maciejewski2005}.

From now on, we shall constrain ourselves to the   Hamiltonian systems with two degrees of freedom.
At this stage, we consider the group of  complex matrices  $A_{2\times 2} \in {\rm PO}(2,\mathbb{C})\cong \mathbb{C}\mathbb{P}^1$. Here ${\rm PO}(2,\mathbb{C})$ denotes the group of $2\times 2$ complex matrices $A$ such that
$AA^T = \alpha I_2$, where $\alpha\in \mathbb{C}\setminus\{0\}$ and
$I_2$ is the $2$-dimensional identity matrix. The potentials $V_1({\mathbf q})$ and $V_2({\mathbf q})$
are said to be equivalents if  there exists a matrix $A\in {\rm PO}(2,\mathbb{C})$ such that $V_1({\mathbf q}) = V_2(A {\mathbf q})$.
This concept  was inspired by a Hietarinta's result proved  in \cite{Hietarinta1987}, namely that  if $V_1({\mathbf q})$ is integrable, then $V_2(A{\mathbf q})$ is also integrable for any $A\in {\rm PO}(2,\mathbb{C})$.

A homogeneous polynomial  $V\in \mathbb{C} [q_1,q_2]$ of degree $k$ is a polynomial such that
$ V(q_1,q_2)=q^k_1 V(1,q_2/q_1)= q_2^k V(q_1/q_2,1)$.
Let be $v(x):=V(1,x)$ and $w(\xi):=V(\xi ,1)$.
Since  ${\rm PO}(2,\mathbb{C}) =\mathbb{C}\mathbb{P}^1$,  we use $[q_1:q_2]$   as   homogeneous coordinates on $\mathbb{C}\mathbb{P}^1$.  Moreover, points $[1:q_2]\in \mathbb{C}\mathbb{P}^1$ will be identified
with  the affine part of  $\mathbb{C}\mathbb{P}^1$ when it is parameterized  by coordinates $x=q_2/q_1$, $q_1\neq 0$.
The points in a neighborhood of infinity  are parameterized by the other coordinate   $\xi = q_1/q_2$.
In \cite{Maciejewski2005}, the authors  choose   amid all equivalent points  a representative $V$,  whose polynomial $w$ (or $v$) has one zero  at  some point of $\mathbb{C}\mathbb{P}^1\setminus \{[1:+i],[1:-i]\}$.
This always happens, except for the  named {\em exceptional potentials} given by
$$V_{k,l}(q_1,q_2)=  \alpha (q_2-i q_1)^l (q_2+iq_1)^{k-l},  \; \mbox{where} \;   l=0,\dots , k  \; \mbox{and} \;  \alpha\in\mathbb{C}\setminus\{0\}.$$
Indeed,  it follows that these potentials lack Darboux points, see \cite{Maciejewski2005}.  So,  Morales-Ramis theory concerning homogeneous potentials cannot be applied to investigate their integrability.

The goal of this paper  is to give an step forward in the issue of existence of
polynomial first integrals  for two degrees of freedom complex Hamiltonian systems
with Hamiltonian function of the form
\begin{equation}\label{B1.1}H = \frac{1}{2}(p_1^2+p_2^2)+ V_{k,l}(q_1,q_2),\end{equation}
with exceptional homogeneous polynomial potentials of degree $k$
given by
\begin{equation}\label{pot_exep}
V_{k,l}(q_1,q_2)=   \alpha (q_2-i q_1)^l (q_2+iq_1)^{k-l} , \hspace{1.2cm}  l=0,1,\dots , k,
\end{equation}
where $k$ is a positive integer. For more details, see \cite{Maciejewski2005}.

As the Hamiltonian \eqref{B1.1} is time independent and the motion takes place on the submanifold
$\mathbb{C}^{2n}$ defined by a constant value $H$, which means that  $H$ itself is always a first integral, and since the Hamiltonian  \eqref{B1.1} has two degree of freedom,  our system will be completely (or Liouville) integrable  
if we can find a second first integral $F$ functionally independent of the Hamiltonian function $H,$
satisfying that $\{H,F\}=0$; that is, if $F$ and $H$ commute under the Poisson bracket.  For more information
the reader is referred to  \cite{Abraham}.

In  \cite{Hietarinta1983}, Hietarinta proved   that the Hamiltonian systems  with the exceptional potentials $V_{k,0}$, $V_{k,1}$,  $V_{k,k-1}$, $V_{k,k}$ and $V_{k,k/2}$   for even  $k$     are polynomial integrable, that is,
they have an additional polynomial first integral functionally  independent of the  Hamiltonian.
As a consequence,  all  exceptional potentials  for $k\leq 4$   are completely integrable with a polynomial integral besides the classical first integral. Additionally, Llibre and Valls in \cite{Llibre2015}  proved that  Hamiltonian systems  having potential  $V_{k,l}$ with even $k\geq 6$  and   $l= 2, \dots , k/2-1,k/2+1,\dots , k-2$,  do not admit an additional  polynomial first integral.
They  performed a non symplectic change of variable  to reduce the issue of existence of analytic first integrals in the Hamiltonian system  to an equivalent  weight-homogeneous polynomial differential system.
Also, the authors in \cite{Llibre2015}  claimed  that nothing is known about the integrability in the case when $k$ is odd,
which still remains  as an open problem.
Nevertheless,  Nakagawa et al.  \cite{Nakagawa2005}  used the so-called direct method to obtain  an additional polynomial first integral  of degree 4 in the momenta for the exceptional potential of degree seven with $l=2$, expressed  as
$V_{7,2}= (q_2-iq_1)^2(q_2+iq_1)^5$.      
Furthermore,  Nakagawa and  Yoshida \cite{Nakagawa2001} claimed that by using the direct method they have checked that for
$5 \leq k \leq 20$, $k\neq 7$, the Hamiltonian system with two degree of freedom and   potential $V_{k,l}$  
and $l\not\in\{0,1,k/2\}$, does not admit a further first polynomial integral with degree smaller than 9  in the momenta.

\section{Exceptional potentials viewed as  polynomial bi-homogeneous potentials}
In \cite{Combot2020}, Combot et al. used  a theorem by Moulin Ollagnier  about integrability of the homogeneous Lotka-Volterra system (see \cite{Moulin2001}),  that
allowed them to make a complete classification of integrable cases of Hamiltonian systems with two degrees
of freedom endowed with polynomial bi-homogeneous potentials. %%rational potential   function.   
As a result, the necessary conditions for the integrability are transformed in a way that leads  to  investigate the integrability of a family of bi-homogeneous potentials which depend on two integer parameters.
This result inspired us to see an exceptional potential as a polynomial bi-homogeneous potential,  in order  to  recover
in a straight way the polynomial integrability of  Hamiltonian system with
potential $V_{k,l}$ with $l=0,1,k-1,k$ and $l=k/2$ for $k$ even.

Consider the symplectic change of coordinates  $x_1=q_1 + i q_2$,  $y_1=\frac{1}{2}( p_1 - i p_2)$,
$x_2= q_1 - i q_2$, $y_2=\frac{1}{2}(p_1 + i p_2)$.
Through some direct calculations, we obtain that  the Hamiltonian is transformed into
\begin{equation}\label{ham1}
H = 2 y_1y_2 +  \beta \; x_1^l x_2^{k-l},  
\end{equation}
where $\beta=  (-1)^l  i^k \alpha$, and the potential is given by
\begin{equation}\label{pot_hom}
 V_{k,l}=\beta \; x_1^l x_2^{k-l}, \end{equation}
which is   usually called  polynomial bi-homogeneous.
An application of the Combot et al. theorems formulated below, will allow us to deduce our result.

\begin{theorem}[Combot et al.  in \cite{Combot2020}]\label{combot_tma1}
The Hamiltonian system $H(x_1,x_2,y_1,y_2) = 2y_1y_2 + x_1^{k_1}x_2^{k_2}$ with $(k_1,k_2)\in \mathbb{Z}^2$
is rationally integrable if and only if either $(k_1,k_2)$ or $(k_2,k_1)$ belongs to the following list
\begin{equation*}
(0,k), \qquad (1,k), \qquad (k,k), \qquad (k,-2-k), \qquad (2,5), \qquad k\in \mathbb{Z}.
\end{equation*}
\end{theorem}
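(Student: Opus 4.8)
The plan is to follow the route of Combot et al.~\cite{Combot2020}: reduce the $4$-dimensional Hamiltonian flow of $H=2y_1y_2+x_1^{k_1}x_2^{k_2}$ to a Lotka--Volterra system whose rational integrability is completely settled by Moulin Ollagnier's theorem~\cite{Moulin2001}, and then translate the resulting arithmetic conditions back into conditions on $(k_1,k_2)$. First I would write Hamilton's equations $\dot x_1=2y_2$, $\dot x_2=2y_1$, $\dot y_1=-k_1x_1^{k_1-1}x_2^{k_2}$, $\dot y_2=-k_2x_1^{k_1}x_2^{k_2-1}$ and pass to the logarithmic-derivative coordinates $a=\dot x_1/x_1=2y_2/x_1$, $b=\dot x_2/x_2=2y_1/x_2$, $c=x_1^{k_1-1}x_2^{k_2-1}$, in which a direct computation yields the autonomous homogeneous quadratic system
\begin{equation*}
\dot a=-a^2-2k_2c,\qquad \dot b=-b^2-2k_1c,\qquad \dot c=c\bigl((k_1-1)a+(k_2-1)b\bigr);
\end{equation*}
the original flow fibres over it with one-dimensional fibres resolved by a single quadrature, and since $H=x_1x_2\bigl(\tfrac{1}{2}ab+c\bigr)$, on the invariant set $H=0$ one has $c=-\tfrac{1}{2}ab$, so that the reduced dynamics becomes the planar Lotka--Volterra system $\dot a=a(k_2b-a)$, $\dot b=b(k_1a-b)$.

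The structural point to be nailed down — and the place where I expect the book-keeping to be most delicate — is that, after this reduction (possibly followed by a further birational change of variables and a time rescaling), one lands exactly in the setting of Moulin Ollagnier's classification, with parameters depending affinely on $(k_1,k_2)$, and that the passage is \emph{reversible} at the level of rational first integrals. Because $H$ itself does not descend to the reduced space, one must verify that no rational first integral is lost or spuriously created when passing to the quotient, with special care on the strata $x_1=0$, $x_2=0$ and for the degenerate values $k_1,k_2\in\{0,1\}$ where the coordinate change breaks down.

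With the reduced system in Lotka--Volterra form, I would invoke Moulin Ollagnier's theorem~\cite{Moulin2001} to list precisely the parameter values that admit a rational (Darboux) first integral, substitute the affine dependence on $(k_1,k_2)$, and intersect with $\mathbb{Z}^2$; this should collapse everything to the four families $(0,k)$, $(1,k)$, $(k,k)$, $(k,-2-k)$ and the single sporadic point $(2,5)$, the invariance of the list under $(k_1,k_2)\leftrightarrow(k_2,k_1)$ being forced by the involution $x_1\leftrightarrow x_2$, $y_1\leftrightarrow y_2$ of $H$. The hard core of this direction is Moulin Ollagnier's theorem itself, whose proof is long and arithmetic; what must be added on top of it is the check that the lone sporadic solution of the Lotka--Volterra Diophantine conditions pulls back to $(k_1,k_2)=(2,5)$ and to no other integer point.

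For the converse direction it then suffices to exhibit, in each case, an extra rational first integral functionally independent of $H$: $F=y_1$ for $(0,k)$, since $\dot y_1=0$; $F=y_1^2+\tfrac{1}{k+1}x_2^{k+1}$ for $(1,k)$, the energy of the subsystem $(x_2,y_1)$, which decouples; $F=x_1y_1-x_2y_2$ for $(k,k)$, whose derivative equals $(k_2-k_1)x_1^{k_1}x_2^{k_2}=0$; for $(k,-2-k)$ the first integral furnished by the reduction above, where the planar system acquires the extra invariant line $a+b=0$ (the dilation direction characteristic of homogeneous potentials of degree $-2$) and is Darboux integrable, the resulting integral then being lifted to the Hamiltonian flow; and for $(2,5)$ the explicit integral of degree four in the momenta obtained by the direct method in~\cite{Nakagawa2005} for $V_{7,2}$. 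The swapped case $(5,2)$ is integrable via the involution, which is the remark later exploited for $V_{7,5}$. All of these integrals are routine to produce and verify, so the substance of the argument lies in the reduction and in the appeal to Moulin Ollagnier's classification.
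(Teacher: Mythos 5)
You should first note that the paper does not prove this statement at all: Theorem~\ref{combot_tma1} is imported verbatim from \cite{Combot2020} and used as a black box (the paper's own contribution, Proposition~\ref{prop1}, is only the direct verification of the explicit integrals). So the comparison here is with the strategy the paper attributes to Combot et al.\ --- reduction to a Lotka--Volterra system and Moulin Ollagnier's classification \cite{Moulin2001} --- and your outline does follow that strategy. The computations you actually perform are correct: Hamilton's equations, the derived system for $a=2y_2/x_1$, $b=2y_1/x_2$, $c=x_1^{k_1-1}x_2^{k_2-1}$, the identity $H=x_1x_2\bigl(\tfrac12 ab+c\bigr)$, and the planar system $\dot a=a(k_2b-a)$, $\dot b=b(k_1a-b)$ on $H=0$, as well as the verification of the integrals $y_1$, $y_1^2+\tfrac{1}{k+1}x_2^{k+1}$ (note the degenerate value $k=-1$, where this expression is not rational and a separate integral is needed) and $x_1y_1-x_2y_2$ for the sufficiency direction.

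The genuine gap is that the necessity direction --- which is the entire content of the theorem --- is announced rather than proved. You restrict to the single level set $H=0$ to obtain the planar Lotka--Volterra system, but rational integrability of the four-dimensional flow is not determined by the dynamics on one energy level: a rational first integral of the full system may restrict trivially or be undefined there, and non-integrability of the reduced planar system does not by itself preclude a rational integral of the Hamiltonian flow away from $H=0$. Moreover, Moulin Ollagnier's theorem classifies first integrals of the three-dimensional factorized Lotka--Volterra system, and the $(a,b,c)$ system you wrote is not in that factorized form; the ``further birational change of variables and time rescaling,'' the proof that rational integrability is neither lost nor created under the passage (including on the strata $x_1=0$, $x_2=0$ and for $k_1,k_2\in\{0,1\}$), and the arithmetic step matching Moulin Ollagnier's parameter conditions to the list $(0,k)$, $(1,k)$, $(k,k)$, $(k,-2-k)$, $(2,5)$ are all flagged by you as ``to be nailed down'' but never carried out. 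Also the $(k,-2-k)$ case of the sufficiency direction is left at the level of a gesture (``Darboux integrable, then lifted''), whereas an explicit integral, $(y_1x_1-y_2x_2)^2-2x_1^{-k-1}x_2^{k+1}$, is what one would need to exhibit. As it stands the proposal is a faithful reconstruction of the route of \cite{Combot2020}, but not a proof.
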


\begin{theorem}[Combot et al.  \cite{Combot2020}]\label{combot_tma2}
The Hamiltonian system given by $H(x_1,x_2,y_1,y_2) = 2y_1y_2 + x_1^{k_1}x_2^{k_2}$
 with $(k_1,k_2)\in \mathbb{Q}^2$ is   algebraically integrable if and only if
\begin{enumerate}
\item[1.] $(k_1,k_2)=(0,k)$, the additional first integral being $J=y_1$.
\item[2.] $(k_1,k_2)=(k,k)$,  \quad $J=y_1x_1-y_2x_2$.
\item[3.] $(k_1,k_2)=(-\frac{1}{2},k)$, \quad $J=y_1(y_1x_1-y_2x_2)-\frac{1}{2}x_1^{-1/2} x_2^{k+1}$.
\item[4.] $(k_1,k_2)=(1,k)$, \quad $J=y_1^2+\frac{1}{k+1} x_2^{k+1}$.
\item[5.] $(k_1,k_2)=(-k-2,k)$, \quad $J=(y_1x_1-y_2x_2)^2-2x_1^{-k-1}x_2^{k+1}$.
\item[6.] $(k_1,k_2)=(2,5)$,  
$J =6y_1^3(y_1x_1-y_2x_2)+x_2^6(4y_1^2x_1^2-8y_1 y_2 x_1x_2+y_2^2x_2^2)-x_1^3x_2^{12}$.
\item[7.] $(k_1,k_2)=(2,-7/4)$,  
$$J = y_1^6+8x_2^{-3/4} y_1^3(4y_2x_2-y_1x_1)+8x_2^{-3/2} (5y_1^2x_1^2-4y_1y_2x_1x_2+8y_2^2x_2^2)-48x_1^3x_2^{-9/4},$$
\item[8.]$(k_1,k_2)=(2,-10/7)$,  
$$J = 4y_1^6+14x_2^{-3/7}y_1^3(7y_2x_2-4y_1x_1)+343x_2^{-6/7} (y_1x_1-7y_2x_2)(y_1x_1-y_2x_2)-2058x_1^3x_2^{-9/7},$$
\item[9.] $(k_1,k_2)=(3,-9/5)$,  
$$J = 4y_1^4-10x_2^{-4/5} (3y_1^2 x_1^2-30y_1y_2x_1x_2 + 25 y_2^2x_2^2) + 225x_1^4x_2^{-8/5},$$
\item[10.] $(k_1,k_2)=(-2/3,-7/3)$,  
$$J = 8y_1(y_1x_1-y_2x_2)^3-x_1^{-2/3}x_2^{-4/3}(13y_1^2x_1^2-44y_1y_2x_1x_2+4y_2^2x_2^2)+16x_1^{-1/3}x_2^{-8/3},$$
\item[11.] $(k_1,k_2)=(-2/3,-10/3)$,  
$$J = 4y_1^2 (y_1x_1-y_2x_2)^4 +2x_1^{-2/3}x_2^{-7/3}y_1(-4y_1^3x_1^3+13y_1^2y_2x_1^2x_2 -20y_1y_2^2x_1x_2^2+2y_2^3x_2^3),$$
\item[12.] $(k_1,k_2)=(-2/3,-5/6)$,  
$$J = 2x_1^{-2/3} (y_1x_1-y_2x_2)^2[x_2^{1/6}-2y_1x_1^{2/3}(y_1x_1-y_2x_2)]+x_1^{-1/3}x_2^{1/3},$$
\item[13.] $(k_1,k_2)=(-3/4,-9/4)$,  
$$J = 4y_1 (y_1x_1-y_2x_2)^3-2x_1^{-3/4} x_2^{-5/4}(3y_1^2x_1^2-12y_1y_2x_1x_2+y_2^2x_2^2)+9x_1^{-1/2}x_2^{-5/2}.$$
\end{enumerate}
\end{theorem}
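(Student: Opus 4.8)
The statement is an equivalence, and I would attack its two implications by entirely different means.

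For the ``if'' direction the plan is a finite, explicit verification. For each of the thirteen cases one writes Hamilton's equations for $H=2y_1y_2+x_1^{k_1}x_2^{k_2}$, namely
\[
\dot x_1=2y_2,\qquad \dot x_2=2y_1,\qquad \dot y_1=-k_1x_1^{k_1-1}x_2^{k_2},\qquad \dot y_2=-k_2x_1^{k_1}x_2^{k_2-1},
\]
and checks that the displayed $J$ satisfies $\{J,H\}\equiv 0$ along solutions --- as a rational identity when only integer exponents occur in $J$, as an identity of algebraic functions otherwise. Functional independence of $J$ and $H$ is then read off from their leading terms in the momenta (each $J$ is either of lower degree in $(y_1,y_2)$ than, or of a leading form different from, the $y_1y_2$ of $H$). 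This half is pure bookkeeping, but it is precisely what delivers algebraic integrability in all thirteen families, including the ones with fractional $(k_1,k_2)$ and fractional-power integrals.

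For the ``only if'' direction the plan is to apply the differential-Galois obstructions of Morales--Ramis type to the bi-homogeneous potential $x_1^{k_1}x_2^{k_2}$. After setting aside by hand the degenerate strata --- the potential depending on a single variable, or the resonance $k_1+k_2=2$ --- one analyses the variational equations along the natural self-similar particular solutions of the system; as in Yoshida's treatment of homogeneous potentials, a suitable change of the independent variable brings the relevant (normal) variational equation, possibly after a finite pullback, to a hypergeometric form whose exponents are explicit rational functions of $k_1,k_2$. By the Morales--Ramis theorem \cite{mora1}, an independent meromorphic --- hence also any algebraic --- first integral forces the identity component of the Galois group of this equation to be abelian; the classical tabulation of hypergeometric equations with abelian identity component then cuts $(k_1,k_2)$ down to an explicit, but a priori infinite, list of candidates, and where the first-order equation is too degenerate to constrain $(k_1,k_2)$ one passes to the second-order (Morales--Ramis--Sim\'o) variational equation.

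The main obstacle, as I see it, is to compress this list down to the thirteen families and to decide the borderline members. Here I would use the link exploited by Combot et al.\ between the reduced flow of a bi-homogeneous Hamiltonian on a level set of $H$ and a planar homogeneous Lotka--Volterra system, expressed in coordinates built from the logarithmic derivatives of $x_1,x_2$ and the products $x_1y_1$, $x_2y_2$: Moulin Ollagnier's complete classification \cite{Moulin2001} of Darboux-integrable Lotka--Volterra systems eliminates all but finitely many candidates, and each survivor is decided individually --- either by exhibiting the algebraic first integral, closing the loop with the ``if'' direction, or by ruling it out through the higher-order variational obstruction. The lone sporadic pair $(2,5)$ --- not part of an infinite family even after swapping $k_1,k_2$, with its integral (case~6) of degree four in the momenta --- is the delicate one, and is settled by writing that integral down rather than by a Galois argument; it illustrates why both the refined obstruction and the Lotka--Volterra reduction are needed. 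One should finally verify that the purely algebraic entries (case~3 and cases~7--13) admit no \emph{rational} first integral, i.e.\ do not already occur in Theorem~\ref{combot_tma1}; this follows by running the same machinery over $\mathbb{C}(x_1,x_2,y_1,y_2)$ in place of its algebraic closure.
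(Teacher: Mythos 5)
You should first be aware that the paper does not prove Theorem~\ref{combot_tma2} at all: it is imported verbatim from Combot et al.~\cite{Combot2020} and used as a black box, the only computation actually carried out in the paper being the direct Poisson--bracket verification, in Proposition~\ref{prop1}, of the polynomial cases (items 1--6) after transporting them to the exceptional potentials. So there is no internal proof to compare yours with; what you have written is a reconstruction of the strategy of \cite{Combot2020} itself, which the paper only summarizes in one sentence at the start of Section~2 (differential Galois/Morales--Ramis obstructions combined with Moulin Ollagnier's classification of homogeneous Lotka--Volterra systems \cite{Moulin2001}). Your ``if'' half coincides with the one piece this paper does check, and carrying it out literally is worthwhile: with the coefficient $6$ printed in item~6 the bracket $\{H,J\}$ does \emph{not} vanish; the residual terms are $(48-3A)x_1^2x_2^5y_1^3+(6A-96)x_1x_2^6y_1^2y_2$ for $J=Ay_1^3(y_1x_1-y_2x_2)+\cdots$, forcing $A=16$, exactly the coefficient used in item~6 of Proposition~\ref{prop1}. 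Your verification would therefore catch a transcription slip in the statement, which is a point in its favour.

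For the ``only if'' half, your outline is faithful to the route of \cite{Combot2020}, but as it stands it is a roadmap rather than a proof: the reduction of the normal variational equation to hypergeometric form for bi-homogeneous (not merely homogeneous) potentials, the recourse to higher variational equations where the first one is non-obstructive, and above all the precise change of variables that turns the reduced flow into a homogeneous Lotka--Volterra system together with the extraction of a finite candidate list from Moulin Ollagnier's theorem constitute the substantive content of that paper, and you defer to it rather than supply it. Two technical points need explicit care if you wanted this to be self-contained: for fractional $(k_1,k_2)$ the Hamiltonian and the putative integrals are only algebraic, so the Morales--Ramis theorem \cite{mora1} must be applied on a finite cover where everything becomes meromorphic (harmless, since finite covers do not alter the identity component of the Galois group, but it must be argued, and your ``finite pullback'' only gestures at it); and functional independence of $J$ and $H$ needs a one-line gradient check rather than just a glance at leading momentum degrees. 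Finally, your closing remark about separating rational from merely algebraic integrability is exactly the distinction between Theorems~\ref{combot_tma1} and~\ref{combot_tma2}, and it is the reason the paper restricts attention to items 1--6.
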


At this stage the reader should be warned that we are 
interested in studying polynomial potentials, as a consequence we will only consider the cases of items  from 1 to 6. 

Since  the potentials \eqref{pot_exep} and \eqref{pot_hom} are equivalent, we
 are thus led to the following consequence of Theorems   \ref{combot_tma1} and \ref{combot_tma2}.

\begin{proposition}\label{prop1}
The Hamiltonian system  with Hamiltonian function given by  \eqref{ham1} is  polynomially  integrable if and only if potentials and additional first integrals $J$ are given by
\begin{enumerate}
\item[1.] $V_{k,0}= \beta\: x_2^k$, \quad $\beta=i^k \alpha$, \quad $J=y_1$.
\item[2.] $V_{k,k}=\beta\: x_1^k$,  \quad $\beta=(- i)^k \alpha$,\quad $J=y_2$.
\item[3.] $V_{k,k/2}=  \beta x_1^{k/2} x_2^{k/2}$, \quad  with even $k$,  $\beta=\alpha$, \quad $J=y_1x_1-y_2x_2$.
\item[4.] $V_{k,1}= \beta \: x_1 x_2^{k-1}$, \quad $\beta=-i^k \alpha$,  \quad $J=y_1^2- \dfrac{1}{k} i^k   \alpha x_2^{k}$.
\item[5.] $V_{k,k-1} = \beta \: x_1^{k-1}x_2$, \quad $\beta =(-1)^{k-1} i^k \alpha$, \quad $J= y_2^2 + \dfrac{1}{k} (-1)^{k+1} i^k \alpha x_1^k $.
%%\textcolor{red}{Correcto: $\beta =(-1)^{k-1} i^k \alpha$}
%
\item[6.] $V_{7,2}=  \beta x_1^2 x_2^5$, \quad $\beta=-i \alpha$, \quad $\alpha = i$, \\
$J=16y_1^3(y_1x_1-y_2x_2)+x_2^6(4y_1^2x_1^2-8y_1 y_2 x_1x_2+y_2^2x_2^2)-x_1^3x_2^{12}$.
\item[7.] $V_{7,5}=  \beta x_1^5 x_2^2$, \quad $\beta=i\alpha$, \quad $\alpha = -i$,  \\
$J=16y_2^3(y_2x_2-y_1x_1)+x_1^6(4y_2^2x_2^2-8y_2 y_1 x_1x_2+y_1^2x_1^2)-x_2^3x_1^{12}$.
\end{enumerate}
\end{proposition}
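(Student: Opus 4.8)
The plan is to obtain Proposition~\ref{prop1} as a direct consequence of Theorems~\ref{combot_tma1} and~\ref{combot_tma2}, exploiting the canonical reduction already performed above. First I would record that the linear symplectic substitution $x_1=q_1+iq_2$, $x_2=q_1-iq_2$, $y_1=\tfrac12(p_1-ip_2)$, $y_2=\tfrac12(p_1+ip_2)$ turns \eqref{B1.1} into \eqref{ham1}, with bi-homogeneous potential $\beta x_1^lx_2^{k-l}$ and $\beta=(-1)^li^k\alpha\neq 0$. Being linear and invertible, this substitution carries polynomial first integrals to polynomial first integrals and preserves functional independence, and by Hietarinta's invariance of integrability under a constant rescaling of the potential we may freely normalize $\beta$. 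Hence \eqref{ham1} is polynomially integrable if and only if the Combot normal form $2y_1y_2+x_1^{k_1}x_2^{k_2}$ with $(k_1,k_2)=(l,k-l)$ is, and we are precisely in the situation of Theorems~\ref{combot_tma1}--\ref{combot_tma2}, with the additional constraint $k_1,k_2\ge 0$ coming from $0\le l\le k$.

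Next I would pin down which exponents survive. Since polynomial integrability implies rational integrability, Theorem~\ref{combot_tma1} forces $(l,k-l)$ or its flip $(k-l,l)$ to belong to $\{(0,m),(1,m),(m,m),(m,-2-m),(2,5):m\in\mathbb Z\}$. The branch $(m,-2-m)$ cannot occur, as $m\ge 0$ and $-2-m\ge 0$ are incompatible; matching $(l,k-l)$ and its flip against the remaining branches leaves precisely $l\in\{0,1,k/2\ (k\text{ even}),k-1,k\}$ together with the isolated pairs $(k,l)=(7,2)$ and $(k,l)=(7,5)$. Conversely, for each of these exponent pairs Theorem~\ref{combot_tma2}, items~1--6, provides an explicit \emph{polynomial} first integral $J$ of the normal form, functionally independent of the Hamiltonian (for $l=0,1,k-1,k$ one compares the structure in the momenta, and for $l=k/2$ and the two degree-seven cases independence is a short direct check). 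This exhausts the seven items of the proposition.

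Finally I would transport the integrals back through the substitution and restore $\beta$. For $l=0$, $l=k$ and $l=k/2$ the integral---respectively $y_1$, $y_2$ and $y_1x_1-y_2x_2$---does not involve the potential, so it is unaffected both by the substitution and by the normalization; items~1--3 then follow after evaluating $\beta=(-1)^li^k\alpha$, which equals $i^k\alpha$, $(-i)^k\alpha$ and $\alpha$ respectively (using $(-1)^{k/2}=i^k$, whence $(-1)^{k/2}i^k=i^{2k}=1$ for even $k$). For $l=1$ and $l=k-1$ the integral has the shape $y_1^2+c\,x_2^k$, respectively $y_2^2+c\,x_1^k$, and imposing $\{H,J\}=0$ pins down $c=\beta/k$; substituting $\beta=-i^k\alpha$ and $\beta=(-1)^{k-1}i^k\alpha$ then yields items~4--5. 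For $(k,l)=(7,2)$ and $(7,5)$ I would normalize $\beta=1$, i.e.\ $\alpha=-i$ and $\alpha=i$ respectively, read off $J$ from item~6 of Theorem~\ref{combot_tma2}, and obtain the $V_{7,5}$ integral from the $V_{7,2}$ one by the exponent-exchanging symmetry $x_1\leftrightarrow x_2$, $y_1\leftrightarrow y_2$.

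I do not expect a genuine obstacle here; what requires care is the bookkeeping in this last step---following the powers of $i$ and the signs of $\beta$ through the substitution, and recomputing the scalar multiplying the potential inside $J$ (for $l=1,k-1$ via a one-line Poisson-bracket computation, and for the $(2,5)$ case fixing the leading coefficient to $16$ by a direct verification, since $J$ in Theorem~\ref{combot_tma2} is recorded only under the normalization $\beta=1$)---together with checking that the matching against Combot's list in the second step is exhaustive.
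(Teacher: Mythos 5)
Your proposal is correct, and it reaches the statement by a route that differs in emphasis from the paper's own proof. The paper's proof is purely a sufficiency check: for each of the seven items it writes down $J$ and verifies $\{H,J\}=0$ by an explicit Poisson-bracket computation, leaving the ``only if'' direction implicit in the preceding appeal to Theorems \ref{combot_tma1} and \ref{combot_tma2}. You instead make the whole reduction explicit: you argue that the linear symplectic substitution and a normalization of $\beta$ (a symplectic scaling $x_i\mapsto cx_i$, $y_i\mapsto c^{-1}y_i$ together with a time rescaling does this cleanly; the appeal to Hietarinta's ${\rm PO}(2,\mathbb{C})$ result is not quite the right citation, though the fact is standard) put \eqref{ham1} in Combot's normal form, you carry out the matching of $(l,k-l)$ against the list of Theorem \ref{combot_tma1} (correctly discarding the branch $(m,-2-m)$), and you transport the integrals of Theorem \ref{combot_tma2} back, recomputing the scalars. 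Your derivation buys two things the paper's proof does not state: an explicit necessity argument and a remark on functional independence of $J$ and $H$; the paper's brute-force verification buys an independent check of the printed formulas, which matters here because the leading coefficient in the degree-seven integral must indeed be $16$ rather than the $6$ appearing in item 6 of Theorem \ref{combot_tma2} as quoted --- a discrepancy you flag and resolve by direct verification, exactly as the paper's computation does implicitly. Your use of the exponent-exchange symmetry $x_1\leftrightarrow x_2$, $y_1\leftrightarrow y_2$ to obtain the $V_{7,5}$ integral from the $V_{7,2}$ one is also the mechanism the paper highlights in the remark following the proposition.
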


\begin{proof}
The function $J$ is a first integral of the  Hamiltonian system  with
Hamiltonian function  \eqref{ham1} given by  $$H(x_1,x_2,y_1,y_2) = 2 y_1y_2 +   V_{k,l} (x_1,x_2),$$ if
 the Poisson bracket  
$$\{H,J\} = \frac{\partial H}{\partial y_1} \frac{\partial J}{\partial x_1} -
 \frac{\partial H}{\partial x_1} \frac{\partial J}{\partial y_1} + \frac{\partial H}{\partial y_2} \frac{\partial J}{\partial x_2}
 - \frac{\partial H}{\partial x_2} \frac{\partial J}{\partial y_2}
$$ is zero. Thence, the proof of the statements  in the theorem relies on a direct construction of $\{H,J\}$
for each one-item in the list  given below.

\begin{enumerate}
\item[1.] $V_{k,0}= i^k \alpha\: x_2^k $,  \quad $J=y_1$
$$
\{H,J\}  = 2 y_2 \cdot 0 + 2y_1\cdot 0 +0\cdot 1 -  k i^k \alpha x_2{k-1}\cdot 0  =0.
$$
\item[2.] $V_{k,k}=(- i)^k \alpha\: x_1^k$,  \quad $J=y_2$
$$
\{H,J\}  = 2 y_2 \cdot 0 + (-1)^{k+1}  k i^k \alpha x_1{k-1}\cdot 0  + 2y_1\cdot 0  + 0\cdot 1 =0.
$$
\item[3.] $V_{k,k/2}=  \alpha x_1^{k/2} x_2^{k/2}$, \quad   \quad $J=y_1x_1-y_2x_2$
$$
\{H,J\}  = 2 y_2y_1 -\frac{1}{2}k\alpha x_1^{\frac{k}{2}-1} x_2^{k/2}\cdot x_1-2 y_1y_2 -\frac{1}{2} k\alpha x_1^{k/2} x_2^{\frac{k}{2}-1} \cdot(-x_2)=0.
$$
\item[4.] $V_{k,1}= -i^k \alpha \: x_1 x_2^{k-1}$,   \quad $J=y_1^2- \dfrac{1}{k} i^k   \alpha x_2^{k}$
\begin{eqnarray*}
\{H,J\}  &=& 2 y_2\cdot 0 + i^k\alpha x_2^{k-1} \cdot (2y_1) + 2y_1 \cdot (-\alpha i^k x_2^{k-1})
+ i^k (k-1) \alpha x_1x_2^{k-2}\cdot 0 \\
&=& 2 i^k\alpha x_2^{k-1} y_1 - 2 \alpha i^k x_2^{k-1}y_1=0
\end{eqnarray*}
\item[5.] $V_{k,k-1} = (-1)^{k-1} i^k \alpha \: x_1^{k-1}x_2$,  \quad $J= y_2^2 + \dfrac{1}{k} (-1)^{k+1} i^k \alpha x_1^k $
\begin{eqnarray*}
\{H,J\}  &=& 2y_2 \cdot (-1)^{k+1}i^k \alpha x_1^{k-1} + (-1)^ki^k(k-1)\alpha x_1^{k-2} x_2 \cdot 0 + 2y_1\cdot 0+
(-1)^ki^k\alpha x_1^{k-1} \cdot (2 y_2)\\
%&=& 2 (-1)(-1)^ki^k \alpha x_1^{k-1} y_2  + 2 (-1)^ki^k\alpha x_1^{k-1} y_2 
&=&0
\end{eqnarray*}
\item[6.] $V_{7,2}=   x_1^2 x_2^5$,  \\
\qquad $J=16y_1^3(y_1x_1-y_2x_2)+x_2^6(4y_1^2x_1^2-8y_1 y_2 x_1x_2+y_2^2x_2^2)-x_1^3x_2^{12}$
\begin{eqnarray*}
\{H,J\}  
%&=& 2 y_2\cdot (-3x_1^2x_2^{12} + 8x_1x_2^6y_1^2+16y_1^4-8x_2^7y_1y_2)\\
%&& \qquad - \; 2x_1x_2^5\cdot 8(x_1^2x_2^6 y_1+ 8x_1y_1^3-x_1x_2^7y_2-6x_2y_1^2y_2)\\
%&& \qquad + \; 2y_1 \cdot (-4)(3x_1^3x_2^{11}-6x_1^2x_2^5y_1^2+14x_1x_2^6y_1y_2+4y_1^3y_2-2x_2^7y_2^2)\\
%&& \qquad -5x_1^2x_2^4 \cdot 2(-4x_1x_2^7y_1-8x_2y_1^3+x_2^8y_2)\\
&=& -6 x_1^2x_2^{12}y_2+16x_1x_2^6y_1^2y_2+32y_1^4y_2-16x_2^7y_1y_2^2 \\
&& \qquad- \;16x_1^3x_2^{11}y_1-128x_1^2x_2^5y_1^3+16x_1^2x_2^{12}y_2+96x_1x_2^6 y_1^2y_2 \\
&& \qquad - \; 24x_1^3x_2^{11}y_1+48x_1^2x_2^5y_1^3-112x_1x_2^6y_1^2y_2-32y_1^4y_2+16x_2^7y_1y_2^2\\
&& \qquad  + \; 40x_1^3x_1^{11} y_1+80x_1^2x_2^5y_1^3-10x_1^2x_2^{12}y_2 = 0 
%&=& (-6+16-10) x_1^2x_2^{12}y_2 + (16-112+96) x_1x_2^6y_1^2y_2 +(32-32)y_1^4y_2 +(16-16)x_2^7y_1y_2^2\\
%&& \qquad+ \; (-16-24+40) x_1^3x_2^{11}y_1 +(-128+48+80)x_1^2x_2^5y_1^3 = 0
\end{eqnarray*}
\item[7.] $V_{7,5}=  x_1^5 x_2^2$,   \\
\qquad $J=16y_2^3(y_2x_2-y_1x_1)+x_1^6(4y_2^2x_2^2-8y_2 y_1 x_1x_2+y_1^2x_1^2)-x_2^3x_1^{12}$
\begin{eqnarray*}
\{H,J\}  &=& 2 y_2 \cdot (-4)(3x_1^{11}x_2^3-2x_1^7y_1^2+14x_1^6x_2y_1y_2-6x_1^5x_2^2y_2^2+4y_1y_2^3\\
&& \qquad - \; 5x_1^4x_2^2 \cdot  2(x_1^8y_1-4x_1^7x_2y_2-8x_1y_2^3) \\
&& \qquad  + \; 2y_1\cdot (-3x_1^{12}x_2^2-8x_1^7y_1y_2+8x_1^6x_2y_2^2+16y_2^4) \\
&& \qquad  - \; 2x_1^5 x_2 \cdot 8(-x_1^7x_2y_1+x_1^6x_2^2y_2-6x_1y_1y_2^2+8x_2y_2^3) =0 
% &=&  -24x_1^{11} x_2^3y_2+16x_1^7y_1^2y_2-112 x_1^6x_2y_1y_2^2+48x_1^5x_2^2y_2^3-32y_1y_2^4\\
% && \qquad  - \; 10 x_1^{12} x_2^2y_1+40x_1^{11}x_2^3y_2+80x_1^5x_2^2y_2^3\\
%  && \qquad  - \; 6x_1^{12}x_2^2y_1-16x_1^7y_1^2y_2+16x_1^6x_2y_1y_2^2+32y_1y_2^4\\
% && \qquad +\; 16x_1^{12}x_2^2y_1-16x_1^{11}x_2^3y_2+96x_1^6x_2y_1y_2^2-128x_1^5x_2^2y_2^3\\
%  &=& (-24+40-16)x_1^{11} x_2^3y_2+(16-16)x_1^7y_1^2y_2 +(-112+16+96) x_1^6x_2y_1y_2^2\\
%   && \qquad +\; (48+80-128) x_1^5x_2^2y_2^3 +(-31+32)y_1y_2^4 + (-10-6+16)x_1^{12}x_2^2y_1\\
%  &=& 0.
\end{eqnarray*}
\end{enumerate}
\end{proof}

\begin{remark}
We can observe that $V_{7,2}(x_1,x_2,y_1,y_2)=V_{7,5}(x_2,x_1,y_2,y_1)$. That is, there exists a permutation $\theta\in S_4$ such that $V_{7,5}(x_1,x_2,y_1,y_2)=V_{7,2}(\theta(x_1),\theta(x_2),\theta(y_1),\theta(y_2))$, where the permutation $\theta$ and permutation matrix $A_\theta$ are given respectively by $$\theta=\begin{pmatrix}
1&2&3&4\\ 2&1&4&3
\end{pmatrix}=(1,2)(3,4),\quad A_\theta=\begin{pmatrix}
0&1&0&0\\
1&0&0&0\\
0&0&0&1\\
0&0&1&0
\end{pmatrix}$$ and $x_1\mapsto 1$, $x_2\mapsto 2$, $y_1\mapsto 3$ and $y_2\mapsto 4$. Now, setting the linear transformations $T(q_1,q_2,p_1,p_2)=\frac{1}{2}(q_1+iq_2,q_1-iq_2,p_1-ip_2,p_1+ip_2)$ and $T_\theta$ with companion matrix $A_\theta$, we obtain $T_\theta\circ T\circ T_\theta^{-1}(q_1,q_2,p_1,p_2)=(q_2,q_1,p_2,p_1)$. Thus, we can see that in coordinates $(q_1,q_2,p_1,p_2)$ it is satisfied that $V_{7,5}(q_1,q_2,p_1,p_2)= V_{7,5}(q_2,q_1,p_2,p_2)$.
\end{remark}

Finally, we summarize all the result of the Proposition \ref{prop1} in terms of the variables  $(q_1,q_2,p_1,p_2)$  in Table \ref{tab1}.
\begin{table}[h]
\caption{Exceptional potential $V_{k,l}$    and the additional  first integral $J$ in terms of their natural variables $(q_1,q_2,p_1,p_2)$.}
 \label{tab1}
\centering
{\scriptsize
\begin{tabular}{ c | c  }
\hline
Exceptional potential  $V_{k,l}$ & First integral $J$\\
\hline
$V_{k,0} = \alpha (q_2+ i q_1)^k$ & $\frac{1}{2} (p_1-ip_2)$\\
$V_{k,k}=\alpha (q_2- i q_1)^k$ & $\frac{1}{2} (p_1 + ip_2)$\\
$V_{k,k/2}=  \alpha (q_2-iq_1)^{k/2}(q_2+iq_1)^{k/2}$ & $-\frac{i}{2}(p_1q_2 - q_1p_2)$\\
$V_{k,1}= \alpha \: (q_2-i q_1)(q_2+iq_1)^{k-1}$ & $\frac{1}{4}(p_1-ip_2)^2- \dfrac{1}{k} i^k   \alpha  (q_1-iq_2)^{k}$\\
$V_{k,k-1} = \alpha (q_2- iq_1)^{k-1}(q_2+i q_1)$ & $\frac{1}{4}(p_1+ip_2)^2 + \dfrac{1}{k} (-1)^{k+1} i^k \alpha (q_1+iq_2)^k $\\
& \\
$V_{7,2}=  (q_1-iq_2)^5 (q_1+iq_2)^2$ &  $-(p_1-ip_2)^3 (p_1+ip_2)(q_1-iq_2) +\frac{1}{4} (p_1+ip_2)^2 (q_1-iq_2)^8 $\\
 & $+ (p_1-ip_2)^4 (q_1+i q_2) - 2 (p_1-i p_2) (p_1+i p_2) (q_1-iq_2)^7 (q_1 + i q_2)$  \\
 & $+ (p_1-ip_2)^2 (q_1-iq_2)^6 (q_1+i q_2)^2 - (q_1-iq_2)^{12} (q_1+i q_2)^3 $ \\
& \\
$V_{7,5}= (q_1-iq_2)^2(q_1+iq_2)^5$ &  $ (p_1+ip_2)^4(q_1-iq_2)-(p_1-ip_2)(p_1+ip_2)^3(q_1+iq_2)$ \\
 & $ +(p_1 + i p_2)^2(q_1-iq_2)^2 (q_1+iq_2)^6$ \\
 & $- 2 (p_1-ip_2)(p_1+ip_2)(q_1-iq_2)(q_1+iq_2)^7$ \\
& $ + \frac{1}{4} (p_1-i p_2)^2 (q_1+i q_2)^8 - (q_1-iq_2)^3 (q_1+iq_2)^{12}$\\
\end{tabular}}
\end{table}

Let us conclude by saying that our results agree well with those of Hietarinta \cite{Hietarinta1987} and
Nakagawa et al. \cite{Nakagawa2005}. Although it is obvious that $V_{7,5}(x_1,x_2,y_1,y_2)=V_{7,2}(x_2,x_1,y_2,y_1)$, it is not evident that $V_{7,5}(q_1,q_2,p_1,p_2)=V_{7,2}(q_2,q_1,p_2,p_1)$ due to we need the conjugation of $T$ with $T_\theta$. Moreover, as far as the authors  are aware, this is the first time that an additional polynomial first integral for $V_{7,5}(q_1,q_2,p_1,p_2)$, different from the Hamiltonian one, is given explicitly.
\bigskip
\section{Algebraic Analysis}
We start this section summarizing for reader's convenience the major facts on algebraic aspects, invariant planes and differential Galois group of the first variational equation. For more information on this subject the reader is referred to \cite{aas2018,ams2021,aabd2013,aad2009,almp2018,ay2020}.

The starting point is the concept of differential field. A differential field $K$ is a field endowed with a derivation $\partial$, such that for all $a,b\in K$ it satisfied:
\begin{enumerate}
    \item $\partial(a+b)=\partial a+\partial b$
    \item $\partial (a\cdot b)=a\cdot \partial b+\partial a\cdot b$
    \item $\partial \left(\dfrac{a}{b}\right)=\dfrac{1}{b^2}(a\cdot \partial b-\partial a\cdot b)$.
\end{enumerate}
The field of constants of $K$, denoted by $C_K$, is given by $$C_K=\{c\in K:\,\, \partial (c)=0\}.$$
Consider the system of linear differential equations over $K$ given by
\begin{equation}\label{ds1}
    \frac{d}{dx}\begin{pmatrix}
    y_1\\
    y_2\\
    \vdots\\
    y_n
    \end{pmatrix}=\begin{pmatrix}
    a_{11}&a_{12}&\ldots&a_{1n}\\
    a_{21}&a_{22}&\ldots&a_{2n}\\
    \vdots&&&\\
    a_{n1}&a_{n2}&\ldots&a_{nn}\\
    \end{pmatrix}\begin{pmatrix}
    y_1\\
    y_2\\
    \vdots\\
    y_n
    \end{pmatrix},\quad a_{ij}=a_{ij}(x)\in K,\quad y_i=y_i(x)
\end{equation}
The fundamental matrix $U=[u_{ij}]$ of the system \eqref{ds1} plays an important role in this theory because any differential extension of $L/K$ must contains $K$ and $u_{ij}$, i.e., $L=K\langle u_{ij}\rangle$. The Picard-Vessiot extension $L/K$ is the extension of $K$ preserving the field of constants, that is $C_L=C_K$. Thus, given a system of first order linear differential equations \eqref{ds1}, the differential Galois group of $Y'=AY$, denoted by $DGal(L/K)$, is the group of $K$-differential automorphisms from $L$ to $L$, that is, $\sigma:\,L\mapsto L$, $\partial(\sigma (a))=\sigma(\partial a)$,  $$DGal(L/K)=\{\sigma:\,\, \sigma(a)=a,\,  \forall a\in K\}.$$ 
The connected identity component of $DGal(L/K)$, denoted by $(DGal(L/K))^0$ is the biggest algebraic subgroup of $DGal(L/K)$ containing the identity. In a general framework, more that Hamiltonian systems with homogeneous potentials, Morales-Ramis theory is the theory that relates differential Galois theory with the integrability of dynamical systems. In particular, Morales-Ramis Theorem for Hamiltonian systems says that \emph{if a Hamiltonian system is integrable, then the connected identity component of the differential Galois group of the first variational equation along any particular solution is an abelian group}.\\

In this way, an \emph{invariant plane} of the Hamiltonian system with first integral \eqref{B1.1} is a restricted plane  satisfying the motion equations. The solutions of motion equations restricted to the invariant plane are called \emph{integral particular curves}, also known \emph{particular solutions} of the Hamiltonian system. More precisely, we are interested in the invariant plane $\Gamma=\{q_2=p_2=0\}$ with particular solution $\gamma(t)=(q_1(t),0,p_1(t),0)$. Let be $q_1=x$ and $-V'(q_1,0)=f(x)$. The explicit solutions for differential equation
\begin{equation}\label{lmp1}
    \frac{d^2x}{dt^2}=f(x)
\end{equation}
are related with the integral curve $(x,\dot x)$ of the one degree of freedom Hamiltonian system
\begin{equation}\label{lmp2}
\dot x=y,\quad  \dot y=f(x), %%\quad h=\frac{y^2}{2}-\int_{x_0}^xf(\tau)d\tau
\end{equation}
where the energy  is given by
\begin{equation}\label{lmp2a}
h=\frac{y^2}{2}-\int_{x_0}^xf(\tau)d\tau.
\end{equation}

The first variational equation of the Hamiltonian system \eqref{ham_2n}, along a particular integral curve $\gamma(t)$ is given by  the linear system 
\begin{equation}\label{vareq}
    \dot{\mathbf{\xi}}=A(t)\mathbf{\xi},\qquad \mathbf{\xi}=
    \begin{pmatrix}
    \xi_1\\ \xi_2\\ \xi_3\\ \xi_4
    \end{pmatrix},
    \qquad A(t)=X'_H \Big|_{\gamma(t)},\qquad X_H=J_2\nabla H,
\end{equation} where $X_H$ depends on $q_1,q_2,p_1,p_2$ and it is restricted to $\gamma(t)$; $\xi$ is unknown vector which depends on $t$ and $J_2$ is the standard symplectic matrix given by
$$J_2=\begin{pmatrix}
0&0&1&0\\
0&0&0&1\\
-1&0&0&0\\
0&-1&0&0
\end{pmatrix}$$

Taking into account the concepts given above, we have the following result.
\begin{proposition}\label{propgal1}
Let $k$ be an even number.
The Hamiltonian system \eqref{ham_2n} with exceptional polynomial potential $V_{k,l}$ has infinite invariant planes \begin{equation}\label{eqgam}\Gamma_{\lambda_1,\lambda_2}=\left\{(q_1,q_2,p_1,p_2)\in \mathbb{C}^4:\, \lambda_1q_1+\lambda_2q_2=\lambda_1p_1+\lambda_2p_2=0,\,\, \lambda_1,\lambda_2\in\mathbb{C}^*\right\}\end{equation} if and only if $k=2l$ or $\lambda_1/\lambda_2=\pm i$.
\end{proposition}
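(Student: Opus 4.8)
The plan is to reduce the statement ``$\Gamma_{\lambda_1,\lambda_2}$ is an invariant plane'' to a single algebraic relation among $\lambda_1,\lambda_2,k,l$ by imposing that the vector field of \eqref{ham_2n} be tangent to $\Gamma_{\lambda_1,\lambda_2}$. Set $\ell_q=\lambda_1q_1+\lambda_2q_2$ and $\ell_p=\lambda_1p_1+\lambda_2p_2$, so that $\Gamma_{\lambda_1,\lambda_2}=\{\ell_q=\ell_p=0\}$; the plane is invariant if and only if $\dot\ell_q$ and $\dot\ell_p$ vanish on it. Along solutions of \eqref{ham_2n} we have $\dot\ell_q=\lambda_1p_1+\lambda_2p_2=\ell_p$, which already vanishes on $\Gamma_{\lambda_1,\lambda_2}$, so the first equation is automatic; and $\dot\ell_p=-\bigl(\lambda_1\partial_{q_1}V_{k,l}+\lambda_2\partial_{q_2}V_{k,l}\bigr)$. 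Since $V_{k,l}$ depends only on $\mathbf q$, invariance of $\Gamma_{\lambda_1,\lambda_2}$ is therefore equivalent to
\begin{equation}\label{invcond}
\lambda_1\,\partial_{q_1}V_{k,l}(q_1,q_2)+\lambda_2\,\partial_{q_2}V_{k,l}(q_1,q_2)=0
\qquad\text{whenever}\qquad \lambda_1q_1+\lambda_2q_2=0 .
\end{equation}

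The next step is to make \eqref{invcond} explicit. Writing $u=q_2-iq_1$ and $v=q_2+iq_1$, the product rule applied to $V_{k,l}=\alpha u^{l}v^{k-l}$ yields, for $1\le l\le k-1$,
\begin{equation*}
\lambda_1\,\partial_{q_1}V_{k,l}+\lambda_2\,\partial_{q_2}V_{k,l}
=\alpha\,u^{\,l-1}v^{\,k-l-1}\bigl[\,l(\lambda_2-i\lambda_1)\,v+(k-l)(\lambda_2+i\lambda_1)\,u\,\bigr].
\end{equation*}
I would then parameterise the line $\{\lambda_1q_1+\lambda_2q_2=0\}$ by $(q_1,q_2)=(\lambda_2 s,-\lambda_1 s)$, $s\in\mathbb C$ (legitimate because $\lambda_1,\lambda_2\in\mathbb C^*$), on which $u=-s(\lambda_1+i\lambda_2)$ and $v=-s(\lambda_1-i\lambda_2)$. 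Substituting and using the identities $(\lambda_2-i\lambda_1)(\lambda_1-i\lambda_2)=-i(\lambda_1^2+\lambda_2^2)$ and $(\lambda_2+i\lambda_1)(\lambda_1+i\lambda_2)=i(\lambda_1^2+\lambda_2^2)$ to collapse the bracket, the restriction of the gradient combination to the line becomes $c\,s^{\,k-1}$ with
\begin{equation*}
c=\alpha\,(-1)^{k-1}\,i\,(\lambda_1+i\lambda_2)^{\,l-1}(\lambda_1-i\lambda_2)^{\,k-l-1}\,(\lambda_1^2+\lambda_2^2)\,(k-2l).
\end{equation*}
Since $k\ge2$, \eqref{invcond} holds exactly when $c=0$; and as the vanishing of $\lambda_1+i\lambda_2$ or of $\lambda_1-i\lambda_2$ already forces $\lambda_1^2+\lambda_2^2=0$, this occurs if and only if $\lambda_1^2+\lambda_2^2=0$ or $k=2l$, i.e.\ if and only if $\lambda_1/\lambda_2=\pm i$ or $k=2l$, which proves both implications. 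Because $l$ is an integer, the branch $k=2l$ is available exactly when $k$ is even, where it reads $l=k/2$; in that case \eqref{invcond} is satisfied for every admissible pair $(\lambda_1,\lambda_2)$, accounting for the infinitely many invariant planes, while otherwise only the (at most two) planes with $\lambda_1/\lambda_2=\pm i$ survive. This is also what connects the family $\Gamma_{\lambda_1,\lambda_2}$ to the integrable potential $V_{k,k/2}$.

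It remains to handle the extreme values $l\in\{0,k\}$, for which $V_{k,l}$ is a pure power of $v$ or of $u$ and the factored gradient above degenerates; there the same substitution applied to $V_{k,0}=\alpha v^{k}$ (resp.\ $V_{k,k}=\alpha u^{k}$) directly reduces the gradient condition to a single monomial in $s$ whose coefficient vanishes exactly along one of the two sheets $\lambda_1/\lambda_2=\pm i$. The only place where care is genuinely needed is a small piece of bookkeeping: in the factored gradient the exponents $l-1$ and $k-l-1$ collapse to $0$ when $l=1$ or $l=k-1$, so one must verify that the reduction of $c$ to its displayed form still goes through in those subcases; it does, because whenever $\lambda_1+i\lambda_2=0$ or $\lambda_1-i\lambda_2=0$ one already has $\lambda_1^2+\lambda_2^2=0$, forcing $c=0$ irrespective of the degenerate prefactor. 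Beyond this I foresee no real difficulty: the whole content of the argument is the clean emergence of the numerical factor $k-2l$ once the gradient condition is restricted to the line.
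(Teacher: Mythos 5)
Your proof is correct and, while it rests on the same underlying tangency idea as the paper, it executes it along a genuinely different and more uniform route. The paper works in the slope variable $c=-\lambda_1/\lambda_2$: it writes out the equations of motion for $V_{k,l}$, imposes $\dot p_2/\dot p_1=c$ on $q_2=cq_1$, $p_2=cp_1$ to get $(c^2+1)(2l-k)=0$ (necessity, which tacitly requires dividing by $\dot p_1$ and a nonvanishing prefactor), and then argues sufficiency separately: the sheets $c=\pm i$ are declared ``trivially'' invariant, and for $k=2l$ the potential is rewritten as $\alpha(q_1^2+q_2^2)^l$ and invariance of the whole family is deduced from a rather loose conservation argument for the ratios $q_2/q_1$ and $p_2/p_1$. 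You instead characterize invariance as the vanishing, on the plane, of the flow derivatives of the linear forms $\lambda_1q_1+\lambda_2q_2$ and $\lambda_1p_1+\lambda_2p_2$, and you compute $\lambda_1\partial_{q_1}V_{k,l}+\lambda_2\partial_{q_2}V_{k,l}$ restricted to the line exactly; the resulting coefficient, proportional to $(\lambda_1+i\lambda_2)^{l}(\lambda_1-i\lambda_2)^{k-l}(k-2l)$, settles both implications in one stroke, with no division by $\dot p_1$ and no separate sufficiency argument for $k=2l$. Your factorization is in fact slightly sharper than the paper: at the extremes $l=0$ and $l=k$ the coefficient reduces to a power of $\lambda_1-i\lambda_2$, resp.\ $\lambda_1+i\lambda_2$, so only one of the two sheets $\lambda_1/\lambda_2=\pm i$ is actually invariant there --- a point the paper's ``trivially invariant'' step glosses over, and which shows that the literal ``if'' direction of the statement requires $1\le l\le k-1$ (or the matching choice of sign) at those degenerate values. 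You observe this in your closing remark but do not reconcile it with the stated equivalence; it would strengthen the write-up to say explicitly that this is a refinement of the proposition rather than leave it implicit.
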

\begin{proof}
The system   \eqref{ham_2n}  with potential $V_{k,l}$ goes over to 
\begin{equation}\label{eqq_h2}
\begin{array}{l}
   \dot q_1=  p_1,  \\
  \dot q_2 =  p_2,\\
 \dot p_1=  \alpha(q_2 + iq_1)^{k - l - 1}(q_2 - iq_1)^{l - 1}(iq_2(2l-k) -kq_1), \vspace{0.1cm}\\
 \dot p_2=  \alpha(q_2 + iq_1)^{k - l - 1}(q_2 - iq_1)^{l - 1}(iq_1(k - 2l) - kq_2).
\end{array}
\end{equation}
Assume $\lambda_1\lambda_2\neq 0$. The conditions $\lambda_1q_1+\lambda_2q_2=\lambda_1p_1+\lambda_2p_2=0$ reads $q_2=cq_1$ and $p_2=cp_1$, where $c=-\lambda_1/\lambda_2\neq 0$. Since  $(q_1,cq_1,p_1,cp_1)$ are invariant planes,  it is satisfied that $\dot q_2=c\dot q_1$ and $\dot p_2=c p_1$ in Eq. \eqref{eqq_h2}. Thus, $\dot p_2/\dot p_1$ in Eq. \eqref{eqq_h2} reads as
$$c=\frac{iq_1(k - 2l) - kq_2}{iq_2(2l-k) - kq_1}=\frac{iq_1(k - 2l) - ckq_1}{icq_1(2l-k) - kq_1}=\frac{i(k - 2l) - ck}{ic(2l-k) - k}.$$
Solving the previous expression we obtain $(c^2+1)(2l-k)=0$ and due to $c\neq 0$ we conclude $c=\pm i$ or $k=2l$.

Next, we show that $(q_1,cq_1,p_1,cp_1)$ is a family of invariant planes.
To see this, we consider  $c=\pm i$ in Eq. \eqref{eqq_h2} and trivially $(q_1,\pm iq_1,p_1,\pm ip_1)$ are invariant planes. Now, we consider $k=2l$. Then  $$V_{2l,l}=\alpha (q_1^2+q_2^2)^l.$$ 
Since  $k=2l$ is an even number,  $l\in\mathbb{Z}^+$ to avoid the case of constant polynomial. Hence, the Hamiltonian system associated with this potential, Eq. \eqref{eqq_h2}, becomes
\begin{equation}\label{eq_h1}
\begin{array}{l}
  \dot q_1= p_1, \\
  \dot q_2= p_2,\\
 \dot p_1=-2l\alpha q_1(q_1^2+q_2^2)^{l-1},\vspace{0.1cm}\\
 \dot p_2=-2l\alpha q_2(q_1^2+q_2^2)^{l-1}.
\end{array}
\end{equation}

A straightforward analysis of Eq. \eqref{eq_h1} shows that $\dfrac{dq_2}{dq_1}=\dfrac{p_2}{p_1}$ and $\dfrac{dp_2}{dp_1}=\dfrac{q_2}{q_1}$ imply  $\dfrac{q_2}{q_1}=\dfrac{p_2}{p_1}=c\in\mathbb{C}^*$. Hence
$c=-\lambda_2/\lambda_1$, $\lambda_1q_1+\lambda_2q_2=\lambda_1p_1+\lambda_2p_2=0$ and therefore $\Gamma_{\lambda_1,\lambda_2}$ defined in Eq. \eqref{eqgam} are infinite invariant planes of Eq. \eqref{eqq_h2}, which completes the proof.

\end{proof}

Assuming $\lambda_1\lambda_2=0$ and $|\lambda_1|+|\lambda_2|>0$, the reader should be warned that $(0,q_2,0,p_2)$ and $(q_1,0,p_1,0)$ are particular cases of  invariant planes $\Gamma_{\lambda_1,\lambda_2}$, defined in Eq. \eqref{eqgam}, of the system given in Eq.  \eqref{eqq_h2} for $k=2l$.  We recall that $H|_\Gamma$ is the hamiltonian restricted to the invariant plane $\Gamma$. The following result is an application of Morales-Ramis Theory to an integrable hamiltonian system.

\begin{proposition}\label{propgal2}
The connected identity component of the differential Galois group of the first variational equation of the Hamiltonian system \eqref{eq_h1}, along particular solutions with $H|_\Gamma=0$,
is a subgroup of the diagonal group.
\end{proposition}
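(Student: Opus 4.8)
The plan is to restrict the system to the invariant plane, exploit the zero--energy condition to produce an explicit algebraic particular solution, and then show that the first variational equation decouples into two second order equations of Euler (equidimensional) type whose solutions are monomials; this forces the Galois group into the diagonal group. Concretely, I would first parametrize $\Gamma=\Gamma_{\lambda_1,\lambda_2}$ by $q_1=x$, $q_2=cx$, $p_1=y$, $p_2=cy$ for a constant $c\in\mathbb{C}^*$ (the two degenerate planes $\{q_2=p_2=0\}$ and $\{q_1=p_1=0\}$ being handled identically, using the $q_1\leftrightarrow q_2$ symmetry of $V_{2l,l}$). On $\Gamma$ the system \eqref{eq_h1} reduces to $\dot x=y$, $\dot y=-2l\alpha(1+c^2)^{l-1}x^{2l-1}$, while $H|_\Gamma=\tfrac12(1+c^2)y^2+\alpha(1+c^2)^{l}x^{2l}$. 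Imposing $H|_\Gamma=0$ gives $y=\kappa x^{l}$ with $\kappa^2=-2\alpha(1+c^2)^{l-1}$, hence $\dot x=\kappa x^{l}$; thus the particular solution $\gamma(t)$ is algebraic over $\mathbb{C}(t)$, being a power of $t-t_0$ (or proportional to $e^{\kappa t}$ when $l=1$).

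Next I would write the first variational equation \eqref{vareq} and compute the Hessian $V''$ of $V_{2l,l}$ along $\gamma$. Euler's identity for the homogeneous potential of degree $2l$ gives $V''q=(2l-1)V'$, so $(1,c)^{\top}$ is an eigenvector of $V''|_{\gamma}$ with eigenvalue $\mu(x)=2l(2l-1)\alpha(1+c^2)^{l-1}x^{2l-2}$; since $V''$ is symmetric and $\operatorname{tr}V''|_{\gamma}=4l^{2}\alpha(1+c^2)^{l-1}x^{2l-2}$, the orthogonal vector $(-c,1)^{\top}$ is an eigenvector with eigenvalue $\nu(x)=2l\alpha(1+c^2)^{l-1}x^{2l-2}$. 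These eigendirections do not vary along $\gamma$, so a constant block--diagonal change of basis splits \eqref{vareq} into two decoupled $2$--dimensional blocks, equivalent to the scalar equations $\ddot u=-\mu(x)u$ (tangential) and $\ddot u=-\nu(x)u$ (normal).

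Changing the independent variable by $\frac{d}{dt}=\kappa x^{l}\frac{d}{dx}$ and using $\kappa^{2}=-2\alpha(1+c^2)^{l-1}$ — this cancellation is exactly the place where the hypothesis $H|_\Gamma=0$ is used — the two blocks become the Euler equations
\[
x^{2}u''+lxu'-l(2l-1)u=0,\qquad x^{2}u''+lxu'-lu=0,
\]
with indicial exponents $\{l,\,1-2l\}$ and $\{1,\,-l\}$ respectively. For every positive integer $l$ these are distinct integers, so each block admits a fundamental system of monomial solutions $u=x^{a}$ with no logarithmic term. Consequently every differential automorphism multiplies each such solution by a constant (because $u'/u$ lies in the base field), so in the basis formed by these monomial solutions together with their derivatives the differential Galois group of \eqref{vareq} consists of diagonal matrices; hence $(DGal(L/K))^{0}$ is a subgroup of the diagonal group, as claimed. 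In fact, since all exponents are integers, one even gets $L=K$ for $l\ge 2$, and a subtorus of $\mathbb{G}_m$ when $l=1$.

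The only steps that are not purely mechanical are the global splitting of the variational equation along $\gamma$ — which rests on the homogeneity of $V_{2l,l}$, that is, on Euler's identity — and the verification that no resonance occurs, a coincidence of indicial exponents being what would produce a logarithm and hence a unipotent, non--diagonal contribution; both are harmless here. I would also emphasize that the zero--energy hypothesis is indispensable: on a nonzero energy level the reduced blocks acquire an extra term and become hypergeometric rather than equidimensional, so diagonality of the Galois group would no longer be guaranteed.
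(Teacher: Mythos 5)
Your proof is correct, but it follows a genuinely different route from the paper's. The paper keeps the full $4\times 4$ first variational equation in the time variable: it substitutes the zero-energy solution $x_\pm(t)=(b_1\pm b_2t)^{-1/(l-1)}$, observes that $x_\pm^{2l-2}=(b_1\pm b_2t)^{-2}$ makes the coefficient matrix rational with two regular singular points, and then asserts (rather briefly, via the inequality of the Hessian coefficients $\widehat{\mu}_1,\widehat{\mu}_2,\widehat{\mu}_3$) that no logarithmic solutions occur, so every solution is of the form $g(t)^{r}$ with rational logarithmic derivative and the identity component sits in the diagonal group. You instead exploit the constancy of the eigendirections $(1,c)^{\top}$ and $(-c,1)^{\top}$ of the Hessian along $\gamma$ (a consequence of homogeneity) to split the variational equation into tangential and normal second-order blocks --- essentially the Yoshida decomposition --- and, after the change of independent variable $t\mapsto x$ permitted by $\dot x=\kappa x^{l}$ on the zero-energy level, you land on two explicit Euler equations with indicial exponents $\{l,1-2l\}$ and $\{1,-l\}$; distinctness of these exponents rules out logarithms outright and yields monomial solutions, hence a diagonal Galois group. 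What your approach buys is precisely a rigorous replacement for the paper's weakest step: ``$\widehat{\mu}_1\neq\widehat{\mu}_2\neq\widehat{\mu}_3$ implies no logarithms'' is not an argument, whereas your indicial computation is; what the paper's approach buys is that it never leaves the rational field $\mathbb{C}(t)$ nor requires the eigenbasis. Two small caveats: your closing parenthetical ``$L=K$ for $l\ge 2$'' holds only if the base field is taken as the function field of the phase curve (generated by $x$); over $\mathbb{C}(t)$ the solutions $x^{a}$ are merely algebraic and the group is a finite cyclic (still diagonal) group, so the main conclusion is unaffected. Also, both your argument and the paper's tacitly exclude the isotropic planes $c=\pm i$ (where $1+c^2=0$ and the eigenbasis degenerates), so this is not a gap relative to the paper.
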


\begin{proof}
The hamiltonian system \eqref{eq_h1} reads
\begin{equation*}
\begin{array}{l}
  \dot q_1= p_1, \\
  \dot q_2= p_2,\\
 \dot p_1=-2l\alpha q_1(q_1^2+q_2^2)^{l-1},\vspace{0.1cm}\\
 \dot p_2=-2l\alpha q_2(q_1^2+q_2^2)^{l-1}.
\end{array}
\end{equation*}
A particular solution over the invariant plane $\Gamma=\{(q_1,cq_1,p_1,cp_1)\}$ of Eq. \eqref{eq_h1} is obtained through the solution of the  differential equation $\dot p_1=\ddot q_1= \mu q_1^{2l-1}$, where $\mu=-2l\alpha (c^2 +1)^{l-1}$. Setting $x=q_1$, we have the differential equation $\ddot x=\mu x^{l}$, which corresponds to Eq. \eqref{lmp1} with $f(x)=\mu x^{2l-1}$. Indeed, from   \eqref{lmp2a}  we have
$$H|_\Gamma=h=\frac{(\dot x)^2}{2}-2\nu^2 x^{2l},\quad -2\nu^2=\frac{\mu}{l+1}.$$ 
Owing to $H|_\Gamma=h=0$, we obtain the particular solutions
$$x_\pm(t)=\frac{1}{\sqrt[l-1]{b_1\pm b_2 t}},\qquad b_1=c_1(l-1),\qquad b_2=2\nu(l-1).$$
At this stage we need to obtain the first variational equation along these particular solutions. It follows that $$ \dot{\mathbf{\xi}}=A(t)\mathbf{\xi},\quad \mathbf{\xi}=
    \begin{pmatrix}
    \xi_1(t)\\\xi_2(t)\\ \xi_3(t)\\ \xi_4(t)
    \end{pmatrix},
    \quad A(t)=\begin{pmatrix}
    0&0&1&0\\
    0&0&0&1\\
    \widehat{\mu}_1x(t)^{2l-2}&\widehat{\mu}_2x(t)^{2l-2}&0&0\\
    \widehat{\mu}_2x(t)^{2l-2}&\widehat{\mu}_3x(t)^{2l-2}&0&0
    \end{pmatrix},$$ where  $\widehat{\mu}_1=-2l\alpha(c^2+2l-1)(c^2+1)^{l-2}$, $\widehat{\mu}_2=-4l\alpha c(l-1)(c^2+1)^{l-2}$ and $\widehat{\mu}_3=-2l\alpha(2lc^2 - c^2 + 1)(c^2+1)^{l-2}$

Since  the particular solutions $x_\pm(t)$ are algebraic of degree $l-1$, it follows that $$x_\pm(t)^{2l-2}=\frac{1}{\left(b_1\pm b_2t\right)^2},$$ and therefore the coefficients matrix of the variational equation has rational elements. Now, this variational equation has two regular singularities $t=-b_1/b_2$ and $t=\infty$, which are of regular type. Moreover, due to $\widehat{\mu}_1\neq \widehat{\mu}_2\neq \widehat{\mu}_3$ we cannot obtain logarithmic solutions and the general solution has the form $c_1\xi_1(t)+c_2\xi_2(t)+c_3\xi_3(t)+c_4\xi_4(t)$, $\xi_k(t)=g_k(t)^{r_k}$, where $g_k\in \mathbb{C}[t]$ and $r_k\in \mathbb{C}$. Thus, the logarithmic derivative of $\xi_k(t)$ is a rational function and then the  connected identity component of the differential Galois group is contained in the diagonal group.
\end{proof}
We observe that considering $h\neq 0$ involves harmonic oscillator ($l=1)$ and hyperelliptic functions ($l>1)$. For $l=1$ we fall in the case of classical harmonic oscillator, which is integrable and the variational equations have constant coefficients, for instance, the differential Galois group is isomorphic to the multiplicative group, which is a connected group. We cannot obtain elliptic functions because the exponent is always even ($2l$). At present we only remark that more information on the relation between integrability of Hamiltonian systems and solvability of variational equations,
the reader is addressed to  \cite{mora1,yo1,yo2}.

\section*{Acknowledgements}
M. Alvarez-Ram\'{\i}rez  has been partly supported from Prodep/M\'exico  grant number 511-6/2019-15963 and the grant  Sistemas Hamiltonianos, Mec\'anica y Geometr\'{\i}a  PAPDI2021 CBI-UAMI. P. Acosta-Hum\'anez has been partly supported from Internal Faculty of Sciences Founds Project No-integrabilidad de campos vectoriales polinomiales en el plano
complejo mediante la Teoría de Galois Diferencial, Instituto de Matem\'atica. Universidad Aut\'onoma de Santo Domingo.

The authors thank to J. J. Morales-Ruiz for his  constructive  comments on the first version of the paper. The authors are indebted with the anonymous referee by their useful comments and suggestions, which we have taken into consideration to improve the manuscript.

\bibliographystyle{plain}
\bibliography{acalst_final}

\end{document}